\author
{Charu Goel 
and Bruce Reznick}
\address{Department of Basic Sciences, Indian Institute of Information Technology Nagpur, 440006, 
India}
\email{charugoel@iiitn.ac.in}
\address{Department of Mathematics, University of
Illinois at Urbana-Champaign, Urbana, IL 61801}
\email{reznick@math.uiuc.edu}
\title[Sums of squares 
of $k$-term forms]
{Sums of squares 
of $k$-term forms\ }
\date{}
\keywords{positive polynomials, sums of squares, sums of $k$-nomial squares}
\subjclass[2010]{11E76, 11E25, 05A19, 05A10, 11B65}
\numberwithin{equation}{section}
\begin{document}

\theoremstyle{definition}
  \newtheorem{thm}{Theorem}[section]
 \newtheorem{df}[thm]{Definition}
 \newtheorem{nt}[thm]{Notation}
 \newtheorem{prop}[thm]{Proposition}
 \newtheorem{rk}[thm]{Remark}
 \newtheorem{lem}[thm]{Lemma}
 \newtheorem{obs}[thm]{Observation}
 \newtheorem{cor}[thm]{Corollary}
  \newtheorem{conj}[thm]{Conjecture}
 \newtheorem{nthng}[thm]{}
\newcommand{\al}{\alpha}
\newcommand{\be}{\beta}
\newcommand{\ga}{\gamma}
\newcommand{\la}{\lambda}


\begin{abstract}
In this paper we study the cones corresponding to sums of squares of $n$-ary $d$-ic forms with at most $k$ terms. We show that these are strictly nested as $k$ increases, leading to the usual sum
of squares cone. We also discuss the duals of these cones. For $n \ge 3$, we construct
indefinite irreducible $n$-ary $d$-ic forms with exactly $k$ terms for $2 \le k \le \binom{n+d-1}{n-1}$.
\end{abstract} 

\maketitle

\section{Introduction}
A real form (homogeneous polynomial) $f$ is
called \textit{positive semidefinite} (psd) if it takes only
non-negative values and it is called a \textit{sum of squares} (sos)
if there exist real forms $h_j$ so that $f = h_1^2 + 
\cdots + h_k^2$. Every sos form must be psd, and every psd form must have even degree $2d$.

More formally, let $F_{n,d}$ denote the vector space of real forms of degree $d$ in $n$ variables; $p \in F_{n,d}$ is called a  $n$-ary $d$-ic form. Following Choi and Lam \cite{CL_1}, we write
\[
P_{n,2d} = \left \{ f \in F_{n,2d}  \ \big \vert \ \text{$f$ is psd}\  \right \}, \qquad 
\Sigma_{n,2d} = \left\{ \sum_{j=1}^m h_j^2, \ \big \vert  \ m \in \mathbb N, h_j \in F_{n,d} \right \}.
\]
The sets $P_{n,2d}$ and $\Sigma_{n,2d}$ are both closed convex cones.

Several authors have also studied sums of squares
of forms with a restricted number of terms. More formally, let $F_{n,d}^k \subset 
F_{n,d}^{}$ denote the subset of $n$-ary $d$-ics with at most $k$ terms, and let  
\[
\Sigma_{n,2d}^k = \left\{ \sum_{j=1}^m h_j^2, \ \big \vert  \ m \in \mathbb N, h_j \in F_{n,d}^k \right \}.
\]

There are only $N(n,d) = \binom{n+d-1}{n-1}$ possible terms in
an $n$-ary $d$-ic form, hence $\Sigma_{n,2d}^k = \Sigma_{n,2d}^{}$ for $k \ge N(n,d)$. Since 
$F_{n,d}^1$ consists of the $n$-ary $d$-ic monomials, 
\[
f \in \Sigma_{n,2d}^1 \iff f(x_1,\dots,x_n) = \sum_{j=1}^m \alpha_j x_1^{2c_{j1}} \cdots x_n^{2c_{jn}} 
=\sum_{j=1}^m \alpha_j x^{2c_j} \in F_{n,2d}, \ \text{where} \ \alpha_j \ge 0.
\]
That is, $f \in \Sigma_{n,2d}^1$ if and only if it is an even $n$-ary $2d$-ic (each term is even in 
each variable) with non-negative coefficients. (For simplicity, we use the multinomial notation $x^{\alpha} = x_1^{\alpha_1} \cdots x_n^{\alpha_n}$.)

A form in $F_{n,2d}$ is called \textit{even symmetric} if it is an even form as well as a symmetric form. 
Necessary and sufficient conditions were given in \cite{CLR-3} for an even
symmetric sextic to be psd and to be sos.  In addition, the term \textit{sum of binomial squares} 
(\textit{sobs}) was defined there to mean a sum of terms $(a x^{\alpha} + b x^{\beta})^2$, and a 
necessary and sufficient condition was given for an even symmetric sextic to be sobs. In our
notation, the set of all sobs $n$-ary $2d$-ics is $\Sigma_{n,2d}^2$. 

Many examples of psd forms in the literature have been constructed by substitution into the
arithmetic-geometric inequality. In \cite{Hu}, Hurwitz proved that if $0 \le a_j \in \mathbb Z$ and
$\sum_j a_j = 2d$, then
\[
a_1x_1^{2d}+ \cdots + a_nx_n^{2d} - 2dx_1^{a_1}\cdots x_n^{a_n} \in \Sigma_{n,2d}^2.
\]
(See also \cite{Rez-3}.) 
More generally, one can put even monomials into the arithmetic-geometric
inequality to obtain what is called an \textit{agiform} (see \cite{Rez-4}). A typical agiform has the
shape
\[
\lambda_1 x^{\alpha_1} + \cdots + \lambda_m x^{\alpha_m} - x^{\sum \lambda_j\alpha_j},
\]
where $0 \le \lambda_j, \sum \lambda_j = 1$,  $\alpha_j, \sum \lambda_j\alpha_j \in \mathbb Z^d$
and each $x^{\alpha_j}$ is an even $n$-ary
$2d$-ic monomial. (For example, the Motzkin form is an agiform with 
$\lambda = (\frac 13,\frac 13, \frac 13)$ and monomials $x_1^4x_2^2, x_1^2x_2^4, x_3^6$.)
Agiforms must be psd. A certain lattice point property on the polytope $cvx(\{\alpha_j\})$
defined by the exponents (see \cite{P-R}) determines  whether an agiform is sos, and if so, then the agiform
is in $\Sigma_{n,2d}^2$. Further,
 for any agiform, there exists $T$ so that, for $r \ge T$,
\[
\lambda_1 x^{r\alpha_1} + \cdots + \lambda_m x^{r\alpha_m} - x^{\sum r \lambda_j\alpha_j}
\in \Sigma_{n,2rd}^2,
\]
so that every agiform can be expressed as a sum of binomial squares in the variables 
$\{x_j^{1/r}\}$.

Agiforms have been generalized by several authors and can be regarded as a special
case of \textit{circuit polynomials}  (see \cite{I-W}). Conditions under which such a form is in $\Sigma_{n,2d}^2$
have been studied by Fidalgo, Kova\v{c}ec, Ghasemi and Marshall in  \cite{Fi-Ko, GhMar-2}.
Recently, Ahmadi and Majumdar \cite{Ahmadi-Majumdar}  discussed optimization and its 
application to $\Sigma_{n,2d}^2$, which they call  SDSOS$_{n,2d}$.  Optimization over
$\Sigma_{n,2d}$ is computationally much faster than optimization over $P_{n,2d}$ and often
gives the right answer. Similarly, optimization over SDSOS$_{n,2d}$ is even faster. 
In \cite[\S3]{Ahmadi-Majumdar},
they discuss multipliers; cases where $p \notin \Sigma_{3,6}^2$ (or even  $\Sigma_{3,6}$), but
$(\sum x_j^2)^rp \in  \Sigma_{3,6+2r}$ for $r=1,2$. They reinterpret P\'olya's Theorem: if $p$ is a 
strictly definite even form in $P_{n,2d}$, then for sufficiently large $r$, 
$(\sum x_j^2)^rp \in  \Sigma_{n,d+2r}^1$. (For more on Polya's Theorem, see \cite{P-R-3}.)

Gouvea, Kova\v{c}ec and Saee \cite{GouvieaKovacecSaee'19} study the question: under what conditions 
does $p \in \Sigma_{n,2}$ imply that $(\sum x_j^2)^rp \in  \Sigma_{n,2+2r}^k$?
They show (Thm. 1.1) that if $p$ is symmetric or $k=2$ or $(n,k) = (4,3)$, then this is true if
and only if $p \in  \Sigma_{n,2+2r}^k$. However, there is some evidence that this result fails for
$(n,k) = (5,3)$. They also discuss the dual cones $(\Sigma_{n,2d}^{k})^*$ (see below), using somewhat
different language than that found in this paper.

In this paper, we consider $\Sigma_{n,2d}^{k}$ as an object in its own right. 
Clearly, $\Sigma_{n,2d}^{k}$ is a convex cone. Our main results are these:

\begin{thm}\label{T:thmA}
For all $n,d,k$, $\Sigma_{n,2d}^{k}$ is a closed convex cone.
\end{thm}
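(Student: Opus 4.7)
The plan is to imitate the classical argument that $\Sigma_{n,2d}$ is closed, with one extra ingredient to handle the $k$-term constraint. Convexity and the cone property are immediate from the definition: for $\lambda \ge 0$ one has $\lambda h^2 = (\sqrt{\lambda}\,h)^2$, and scaling a form by a real number never increases its number of terms. Hence if $f_1,f_2 \in \Sigma_{n,2d}^{k}$ and $\lambda_1,\lambda_2 \ge 0$, then $\lambda_1 f_1 + \lambda_2 f_2 \in \Sigma_{n,2d}^{k}$.

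The key observation for closedness is that, although $F_{n,d}^{k}$ is not a linear subspace of $F_{n,d}$ when $k < N(n,d)$, it is a \emph{closed} subset of $F_{n,d}$: listing the $N(n,d)$ monomials of degree $d$ as a set $M$, one has
\[
F_{n,d}^{k} \;=\; \bigcup_{S \subseteq M,\ |S| \le k} V_S,
\]
a finite union of the linear subspaces $V_S = \mathrm{span}(S)$, hence closed.

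Next I would apply Carath\'eodory's theorem for convex cones inside the finite-dimensional space $F_{n,2d}$: since $\Sigma_{n,2d}^{k}$ is by definition the conical hull of the squares $\{h^2 : h \in F_{n,d}^{k}\}$ (and $F_{n,d}^{k}$ is closed under scalar multiplication), every $f \in \Sigma_{n,2d}^{k}$ admits a representation $f = \sum_{j=1}^{D} h_j^2$ with $D := \dim F_{n,2d}$ and $h_j \in F_{n,d}^{k}$. Now take a sequence $f_m \to f$ in $F_{n,2d}$ with $f_m \in \Sigma_{n,2d}^{k}$, and write $f_m = \sum_{j=1}^{D} (h_j^{(m)})^2$ as above. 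Equip $F_{n,d}$ with the sup-norm $\|h\| = \max_{\|x\|_2 = 1} |h(x)|$, so that $\|h^2\| = \|h\|^2$, and observe that $0 \le (h_j^{(m)})^2 \le f_m$ pointwise yields $\|h_j^{(m)}\|^2 \le \|f_m\|$. Since $\|f_m\|$ is bounded, each sequence $(h_j^{(m)})_m$ is bounded; passing to a subsequence, $h_j^{(m)} \to h_j$ in $F_{n,d}$, and by closedness of $F_{n,d}^{k}$ we have $h_j \in F_{n,d}^{k}$. Taking limits gives $f = \sum_{j=1}^{D} h_j^2 \in \Sigma_{n,2d}^{k}$.

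The main obstacle is precisely the uniform boundedness of the $h_j^{(m)}$: without controlling each square by the whole sum, the Carath\'eodory bound alone would not permit the extraction of a convergent subsequence. This is handled by the monotonicity of the sup-norm on the psd cone, or equivalently by the pointedness of the ambient cone $\Sigma_{n,2d}^{k} \subseteq P_{n,2d}$. The closedness of $F_{n,d}^{k}$ is the one new ingredient beyond the classical $k = N(n,d)$ case, and it is what allows the $k$-term constraint to pass to the limit.
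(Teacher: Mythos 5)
Your proposal is correct and follows essentially the same route as the paper: Carath\'eodory's theorem to bound the number of squares by $N(n,2d)$, pointwise domination of each square by the sum to get uniform bounds on the summands, and a compactness argument to pass to the limit. Your observation that $F_{n,d}^{k}$ is closed as a finite union of coordinate subspaces is just a repackaging of the support argument in the paper's Lemma \ref{L:BW}, and your sup-norm bound over the unit sphere plays the same role as the paper's evaluation on a finite set $T$.
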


\begin{thm}\label{T:thmB}
We have $\Sigma_{n,2d}^{k-1} \subsetneqq \Sigma_{n,2d}^{k}$ for $2 \le k \le N(n,d)$.
\end{thm}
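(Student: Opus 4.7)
The inclusion $\Sigma_{n,2d}^{k-1} \subseteq \Sigma_{n,2d}^{k}$ is immediate, so the content is in proving strictness. My plan is to exhibit a form $h \in F_{n,d}$ with exactly $k$ monomial terms whose square $h^2$ lies in $\Sigma_{n,2d}^k \setminus \Sigma_{n,2d}^{k-1}$. Membership $h^2 \in \Sigma_{n,2d}^k$ is trivial, so the work is in proving non-membership in $\Sigma_{n,2d}^{k-1}$.

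The key tool will be the following rigidity lemma: \emph{if every irreducible factor of $h$ over $\mathbb{R}$ is indefinite, then any representation $h^2 = \sum_j g_j^2$ with $g_j \in F_{n,d}$ forces $g_j = c_j h$ for scalars $c_j \in \mathbb{R}$.} In particular, every nonzero $g_j$ has support exactly $\mathrm{supp}(h)$, hence $k$ terms, so $h^2 \notin \Sigma_{n,2d}^{k-1}$. To prove the lemma I factor $h = \prod_i p_i^{a_i}$ into distinct irreducibles over $\mathbb{R}$. For each $p_i$, indefinite plus irreducible implies that $\mathbb{R}[x_1,\dots,x_n]/(p_i)$ is a formally real integral domain, since evaluation at a smooth real point of the real zero set of $p_i$ provides an ordering. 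Let $v_i$ denote the $p_i$-adic valuation and set $m = \min_j v_i(g_j)$. Dividing $\sum_j g_j^2$ by $p_i^{2m}$ and reducing modulo $p_i$ yields a sum of nonzero squares in the formally real quotient, which cannot vanish; hence $v_i(\sum_j g_j^2) = 2m$. Comparing with $v_i(h^2) = 2a_i$ forces $m = a_i$, so $p_i^{a_i} \mid g_j$ for every $j$. Taking the product over $i$ gives $h \mid g_j$, and degree matching then forces $g_j = c_j h$.

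It remains to construct such an $h$. For $n=1$ the range $2 \le k \le N(1,d) = 1$ is empty. For $n=2$, where $k \le d+1$, I take $h = (x_1+x_2)^{k-1}\,x_2^{d-k+1}$: the binomial expansion gives exactly $k$ distinct monomial terms, and both $x_1+x_2$ and $x_2$ are linear, hence indefinite irreducible, so the rigidity lemma applies. For $n \ge 3$ and any $2 \le k \le N(n,d)$, I will invoke the construction, announced in the abstract and established in a later section of the paper, of an indefinite irreducible $n$-ary $d$-ic with exactly $k$ terms; the lemma applies directly. The main obstacle is the non-cancellation step in the rigidity lemma: the lowest-order $p_i$-adic part of $\sum g_j^2$ must not vanish, and this relies crucially on the formal reality of $\mathbb{R}[x_1,\dots,x_n]/(p_i)$, which in turn depends on the indefiniteness of each $p_i$ (a definite irreducible $p_i$ would factor as $q\bar q$ over $\mathbb{C}$, putting $\sqrt{-1}$ into the residue field and breaking the argument).
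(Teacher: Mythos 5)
Your overall strategy coincides with the paper's: reduce strictness to exhibiting a form $h$ with exactly $k$ terms whose square admits only the trivial sum-of-squares representation, handle $n=2$ with $(x_1+x_2)^{k-1}$ times a power of a variable, and handle $n\ge 3$ via indefinite irreducible $k$-term forms (Corollary \ref{C:square}). Your ``rigidity lemma'' is essentially the Proposition the paper quotes from \cite[Theorem 4.5.1]{BCR} and \cite{Goel}, and your valuation argument is a correct and in fact slightly more unified rendering of it: requiring only that every real irreducible factor of $h$ be indefinite subsumes both cases (``indefinite irreducible'' and ``product of linear forms'') of the paper's statement in one hypothesis. Two small caveats there: the formal reality of $\mathbb{R}[x_1,\dots,x_n]/(p_i)$ for indefinite irreducible $p_i$ is itself the nontrivial content of the cited BCR theorem (your one-line justification via a smooth real point is the standard sketch, not a proof), and your parenthetical claim that a definite irreducible form must factor as $q\bar q$ over $\mathbb{C}$ is false (e.g.\ $x^2+y^2+z^2$ is definite and absolutely irreducible); fortunately that remark plays no role in the argument.

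The genuine gap is the case $n\ge 3$, which you dispose of by ``invoking the construction \dots established in a later section of the paper.'' That construction \emph{is} the proof of Theorem \ref{T:thmB} for $n \ge 3$; it cannot be assumed. Concretely, the paper must (i) show $x_1^d-x_2^d+x_3^d$ is irreducible (Proposition \ref{T:PR}, via the $z$-adic expansion of a putative factorization of $f(x,y)+z^d$ for square-free $f$); (ii) prove that the irreducible forms are an open subset of $\mathcal F_{n,d}$ (Theorem \ref{irred}), which is the technical heart of Section 3 and rests on the Beauzamy--Bombieri--Enflo--Montgomery product inequality (Proposition \ref{T:BBEM}) to get compactness of normalized factor sequences; (iii) perturb $x_1^d-x_2^d+x_3^d$ by $j$ additional small monomials to realize every $k=3+j$ with $0\le j\le N(n,d)-3$; and (iv) treat $k=2$ separately with the explicit form $x_1x_2^{d-1}+x_3^d$, whose irreducibility is checked by hand since a two-term perturbation of the three-term seed is unavailable. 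Without supplying some version of (i)--(iv) -- or an alternative explicit family of indefinite irreducible $k$-term forms -- your argument establishes the theorem only for $n=2$.
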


\

In some cases, it is not hard to illustrate
 this strict inclusion. For $k=1$,  if $p(x_1,\dots,x_n) = x_1^{d-1}(x_1+x_2)$, then
 $p(x)^2$ is evidently in 
$\Sigma_{n,2d}^2 \setminus \Sigma_{n,2d}^1$. This example provides a template for the
results we shall present here. More generally, we shall rely on a result which can be 
found in \cite[Theorem 4.5.1] {BCR} or \cite[Theorem 1.5]{Goel}.
\begin{prop}
If $h \in F_{n,d}^k$ is an indefinite irreducible form, then 
\begin{equation}\label{E:irredsquare}
h^2 = \sum_{j=1}^m h_j^2 \iff h_j = \lambda_j h,\quad  \sum_{j=1}^m \lambda_j^2 =1.
\end{equation}
The same is true if $h$ is a product of linear forms.
\end{prop}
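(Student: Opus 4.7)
The backward implication is immediate: if $h_j = \lambda_j h$ with $\sum_j \lambda_j^2 = 1$, then $\sum_j h_j^2 = \bigl(\sum_j \lambda_j^2\bigr) h^2 = h^2$. All the content lies in the forward direction, where the plan is to show that every $h_j$ is forced to be a real scalar multiple of $h$; the scalars $\lambda_j$ then automatically satisfy $\sum \lambda_j^2 = 1$ upon re-substitution into $h^2 = \sum h_j^2$.

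The starting observation I would use is that at any real zero $a$ of $h$, the identity $0 = h(a)^2 = \sum_j h_j(a)^2$ with nonnegative real terms forces $h_j(a) = 0$ for every $j$. Thus the real zero set $V_{\mathbb{R}}(h) = \{x \in \mathbb{R}^n : h(x)=0\}$ is contained in each $V_{\mathbb{R}}(h_j)$, and the remaining task is to upgrade this set-theoretic containment to an algebraic divisibility $h \mid h_j$ in $\mathbb{R}[x_1,\dots,x_n]$.

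To effect this upgrade, I would combine the indefiniteness and irreducibility hypotheses. Indefiniteness produces a line segment along which $h$ changes sign; the one-variable intermediate value theorem yields a zero, and for generic endpoints this zero is simple, so the directional derivative of $h$ is nonzero there, giving a nonsingular real point of $V_{\mathbb{R}}(h)$. Indefiniteness also rules out a complex factorization of the form $h = h_1 \bar h_1$ with $h_1 \in \mathbb{C}[x]$ (which would force $h \geq 0$), so $h$ remains irreducible over $\mathbb{C}$. These two facts together imply, via the Real Nullstellensatz (Dubois--Risler), that the ideal of polynomials vanishing on $V_{\mathbb{R}}(h)$ equals $(h)$ itself; equivalently, $V_{\mathbb{R}}(h)$ is Zariski dense in $V_{\mathbb{C}}(h)$, so any real polynomial vanishing on $V_{\mathbb{R}}(h)$ vanishes on $V_{\mathbb{C}}(h)$ and hence is divisible by $h$. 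Thus $h \mid h_j$, and a degree count forces $h_j = \lambda_j h$ with $\lambda_j \in \mathbb{R}$.

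For the case in which $h = \prod_i \ell_i^{a_i}$ is a product of real linear forms, $h$ may be reducible and the Nullstellensatz argument does not apply verbatim. The plan there is to localize: at a smooth real point of the hyperplane $\ell_i = 0$ disjoint from the other hyperplanes $\ell_j = 0$, the function $h$ vanishes to exact order $a_i$ transverse to $\ell_i = 0$, and the pointwise inequality $h_j^2 \leq h^2$ forces $h_j$ to vanish to order at least $a_i$ along $\ell_i$, giving $\ell_i^{a_i} \mid h_j$; multiplying over $i$ yields $h \mid h_j$ as before. The main obstacle, and the step I expect to require the most care, is precisely this passage from set-theoretic vanishing to ideal-theoretic divisibility: verifying that indefiniteness really does produce a nonsingular real point so that $(h)$ coincides with its own real radical, and performing the multiplicity bookkeeping cleanly when $h$ has repeated linear factors.
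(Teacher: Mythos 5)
The paper does not actually prove this proposition; it is quoted with a citation to \cite[Theorem 4.5.1]{BCR} and \cite[Theorem 1.5]{Goel}, so there is no in-paper argument to compare against. Your proposal is, in effect, a reconstruction of the proof of the cited result, and it is sound: the reduction to showing $h \mid h_j$, the observation that $V_{\mathbb{R}}(h) \subseteq V_{\mathbb{R}}(h_j)$, the use of indefiniteness both to exclude the factorization $h = c\, f\bar f$ (so that $\mathbb{R}$-irreducibility upgrades to $\mathbb{C}$-irreducibility) and to produce a nonsingular real point (so that $V_{\mathbb{R}}(h)$ is Zariski dense in $V_{\mathbb{C}}(h)$ and $(h)$ is its own real radical) is exactly the content of BCR 4.5.1, and the degree count finishes the forward implication. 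The one step you rightly flag as delicate --- that a generic segment from $\{h>0\}$ to $\{h<0\}$ crosses $Z(h)$ at a simple zero --- does go through: since $h$ is irreducible it is squarefree, so for fixed $a$ with $h(a)>0$ the set of $b$ for which $g_b(t)=h(a+t(b-a))$ has a repeated complex root (equivalently, the line $ab$ is tangent to $V_{\mathbb{C}}(h)$ or meets its singular locus) is contained in the zero set of $\mathrm{disc}_t(g_b)$, a polynomial in $b$ that is not identically zero; hence almost every $b$ in the open set $\{h<0\}$ yields a segment whose sign-change zero is simple, giving a point where the directional derivative, and so the gradient, of $h$ is nonzero. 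Your treatment of the product-of-linear-forms case by localizing near a point of one hyperplane away from the others and peeling off factors $\ell_i$ one at a time (each $h_j$ satisfies $h_j^2 \le h^2$ pointwise, so each is individually divisible by $\ell_i$, and one iterates on $h_j/\ell_i$) is also correct; note only that the linear forms must be understood as real and pairwise non-proportional after grouping, since, e.g., $(x^2+y^2)^2 = (x^2-y^2)^2 + (2xy)^2$ shows the conclusion fails for products of non-real linear forms.
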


\begin{cor}\label{C:square}
If $h \in F_{n,d}^k \setminus F_{n,d}^{k-1} $ is an indefinite irreducible form or a product of linear forms, then $h^2 \in \Sigma_{n,2d}^{k}\setminus \Sigma_{n,2d}^{k-1}$.
\end{cor}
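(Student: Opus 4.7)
The plan is to get $h^2 \in \Sigma_{n,2d}^k$ essentially for free, and then use the preceding proposition to rule out $h^2 \in \Sigma_{n,2d}^{k-1}$ by a direct contradiction argument.

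First I would observe that $h \in F_{n,d}^k$ gives $h^2 = h^2$ as a single square of a $k$-term form, so trivially $h^2 \in \Sigma_{n,2d}^k$. Note also that $h \ne 0$ (an indefinite form, or a nonzero product of linear forms, is nonzero), so $h^2 \ne 0$.

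Next I would assume for contradiction that $h^2 \in \Sigma_{n,2d}^{k-1}$, so that there exist $h_1,\dots,h_m \in F_{n,d}^{k-1}$ with $h^2 = \sum_{j=1}^m h_j^2$. Invoking the proposition (equation \eqref{E:irredsquare}) — whose hypothesis is exactly that $h$ is indefinite irreducible, or a product of linear forms — we get $h_j = \lambda_j h$ with $\sum \lambda_j^2 = 1$. Since the $\lambda_j$'s cannot all vanish, pick any $j$ with $\lambda_j \ne 0$; then $h_j = \lambda_j h$ has \emph{exactly} the same support as $h$, namely $k$ terms. But $h_j \in F_{n,d}^{k-1}$ means $h_j$ has at most $k-1$ terms, a contradiction.

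The argument requires no calculation beyond counting monomials, so there isn't really a main obstacle once the proposition is in hand; the only point needing a moment's care is confirming that $h_j = \lambda_j h$ with $\lambda_j \ne 0$ genuinely has $k$ terms (scalar multiplication by a nonzero scalar preserves the support of a form), and that at least one such $j$ exists (forced by $h^2 \ne 0$). Combining both halves gives $h^2 \in \Sigma_{n,2d}^k \setminus \Sigma_{n,2d}^{k-1}$, as required.
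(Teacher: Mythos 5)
Your argument is correct and is exactly the reasoning the paper intends: the corollary is stated without proof as an immediate consequence of the proposition, and your two steps (the trivial inclusion $h^2\in\Sigma_{n,2d}^k$, plus the contradiction via $h_j=\lambda_j h$ having exactly $k$ terms for any $\lambda_j\neq 0$) fill in precisely those implicit details.
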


Here is a quick proof of Theorem \ref{T:thmB} in the special case that $n=2$.
\begin{proof}
For $1 \le k \le d+1=N(2,d)$, consider the product 
$g_k(x) = x_1^{d-k+1}(x_1+x_2)^{k-1} \in F_{n,d}^{k}$; then 
$g_k^2 \in \Sigma_{n,2d}^{k} \setminus \Sigma_{n,2d}^{k-1}.$
\end{proof}

What follows is the organization of the paper.
In section two, we review some notation for forms and use this to 
give a proof of Theorem  \ref{T:thmA}. Under the ``standard" Fischer inner product, we describe the
dual cone, $(\Sigma_{n,2d}^{k})^*$. It follows from Theorem \ref{T:thmB} that 
\begin{equation}\label{E:dual}
(\Sigma_{n,2d}^{k-1})^* \supsetneqq (\Sigma_{n,2d}^{k})^*, 
\end{equation}
We give some explicit examples of \eqref{E:dual} in the easiest case: $(n,2d) = (2,4)$.

In section three, we prove Theorem  \ref{T:thmB} for $n \ge 3$. In view of Corollary \ref{C:square}, it suffices to
find indefinite irreducible $n$-ary $d$-ics with exactly $k$ terms for $n \ge 3$ and $2 \leq k \leq N(n,d)$. For $k \ge 3$, the ones we use
are  small perturbations of $x_1^d - x_2^d + x_3^d$.
Such a perturbation is evidently indefinite. The irreducibility follows from  the well-known (but hard to cite)
theorem from algebraic geometry that the irreducible forms comprise an open set. We include a self-contained proof
of this fact as Theorem  \ref{irred}. The paper concludes with some open questions.

\section{The cone \texorpdfstring{$\Sigma_{n,2d}^k,$} \ \  its dual, and the proof of Theorem \ref{T:thmA}}

Much of the background material in this section is adapted from Chapter 3 of \cite{Rez-1}; see
also \cite{Rez-2}. We begin with some notation for forms. It is sensible to consider the more general set $\mathcal F_{n,d}$ of 
forms in $\mathbb C[x_1,\dots,x_n]$ of degree $d$, so that $F_{n,d}$ is just the subset of $\mathcal F_{n,d}$ with real coefficients. 

 The index set consists of $n$-tuples of non-negative integers:  
\begin{equation}
 \mathcal I(n,d) = \biggl\lbrace i=(i_1,\dots,i_n): \sum\limits_{k=1}^n
 i_k = d\biggr\rbrace.
\end{equation}
We have  $\binom {n+d-1}{n-1} = N(n,d) = |\mathcal I(n,d)|$;  for $i
\in \mathcal I(n,d)$, let 
$c(i) = \frac{d!}{i_1!\cdots i_n!}$ be the associated multinomial coefficient.
Every $p \in \mathcal F_{n,d}$ can be written as
 \begin{equation}
p(x_1,\dots,x_n)=\sum_{i\in\mathcal I(n,d)} c(i)a(p;i)x^i,\ c(i) \in \mathbb C.
\end{equation}

We also define the \textit{support} of
$p$, $S(p)$,  to be the set of exponents of monomials whose exponents appear in $p$:
\[
S(p) = \{i \in {\mathcal I(n,d)} \ | \ a(p;i) \neq 0\}.
\] 
It follows immediately that $p \in F_{n,d}^k  \iff |S(p)| \le k$.

We identify a form $p \in \mathcal F_{n,d}$ with its tuple of coefficients $(a(p;i)) \in \mathbb C^{N(n,d)}$, and similarly,
a form $p \in F_{n,d}$ with its tuple of coefficients $(a(p;i)) \in \mathbb R^{N(n,d)}$. 
In this sense, $P_{n,2d}$ and $\Sigma_{n,2d}$ are convex cones in 
$\mathbb R^{N(n,2d)}$. The usual topology for convex cones in
$\mathbb C^{N(n,d)}$ pulls back to the usual topology for forms in $\mathcal F_{n,d}$:
\[
p_n \to p \iff a(p_n;i) \to a(p;i) \quad \text{for all $i \in \mathcal I(n,d)$}.
\]

The classical Fischer inner product (see \cite{Rez-1}) has wide applications:
For $p$ and $q$ in $\mathcal F_{n,d}$, let 
\begin{equation}\label{E:ip}
[p,q] = \sum_{i \in \mathcal I(n,d)} c(i)a(p;i)\overline{a(q;i)}. 
\end{equation}
This can be used to define a norm on $\mathcal F_{n,d}$ which we use in the next section:
\begin{equation}\label{E:norm}
||p||^2 = [p,p] = \sum_{i \in \mathcal I(n,d)} c(i)|a(p;i)|^2. 
\end{equation}
Since $c(i) \ge 1$ for $i \in \mathcal I(n,d)$, we obtain the following trivial bound:
\begin{equation}\label{E:bound}
||p|| \le M \implies |a(p;i)| \le \sqrt{M}.
\end{equation}
This applies equally well to $F_{n,d}$, where the $a(p;i)$'s and $a(q;i)$'s are real. 

There is an elementary analytic lemma which will be useful in the proof of Theorems 
\ref{T:thmA} and \ref{T:thmB} 
and for the sake of completeness, we include a proof.

\begin{lem}\label{L:BW}
Let $(F_m)$,  $F_m = (f_{m,1}, \dots, f_{m,v})$, $f_{m,\ell} \in \mathcal F_{n,d_{\ell}}$, be a sequence of 
$v$-tuples of forms
such that, for all $m$, $1 \le \ell \le v$, and  $i \in \mathcal I(n,d_{\ell})$, 
we have $|a(f_{m,\ell};i)| \le T$ for some $T > 0$. 
Then there exists a convergent subsequence 
$(F_{m(t)})$; that is, for all $\ell$,
there exist $f_{\ell} \in \mathcal F_{n,d_\ell}$ so that  $f_{m(t),\ell} \to f_{\ell}$. Furthermore, if all forms are
real and 
each $f_{m,\ell} \in F_{n,d_\ell}^k$, then $f_\ell \in F_{n,d_\ell}^k$ as well. 
\end{lem}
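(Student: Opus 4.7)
The plan is to reduce everything to elementary Bolzano--Weierstrass in a finite-dimensional space and then handle the ``at most $k$ terms'' condition by a pigeonhole refinement.

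First I would observe that each $\mathcal F_{n,d_\ell}$ is a finite-dimensional vector space (of complex dimension $N(n,d_\ell)$), and that the hypothesis $|a(f_{m,\ell};i)|\le T$ says exactly that the coefficient vector of each $f_{m,\ell}$ lies in a compact subset of $\mathbb C^{N(n,d_\ell)}$. The total collection of coordinate sequences indexed by the pairs $(\ell,i)$ with $1\le\ell\le v$ and $i\in\mathcal I(n,d_\ell)$ is finite, so a standard diagonal extraction yields a single subsequence $(F_{m(t)})$ along which every coordinate $a(f_{m(t),\ell};i)$ converges in $\mathbb C$ (or in $\mathbb R$, in the real case). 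Define $f_\ell\in\mathcal F_{n,d_\ell}$ by declaring $a(f_\ell;i)$ to be the corresponding limit; then by definition of the topology recalled just above \eqref{E:ip}, $f_{m(t),\ell}\to f_\ell$ in $\mathcal F_{n,d_\ell}$.

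For the second assertion, the key point is that the space of possible supports is finite: there are only $\binom{N(n,d_\ell)}{\le k}$ subsets of $\mathcal I(n,d_\ell)$ of size at most $k$. Under the assumption $f_{m,\ell}\in F_{n,d_\ell}^k$ for every $m$, the support $S(f_{m,\ell})$ takes only finitely many values. By pigeonhole, after passing to a further subsequence (again applied simultaneously over the finitely many indices $\ell$), we may assume that there is a fixed set $S_\ell\subset\mathcal I(n,d_\ell)$ with $|S_\ell|\le k$ such that $S(f_{m(t),\ell})\subseteq S_\ell$ for all $t$. Passing to the limit, any coordinate $i\notin S_\ell$ satisfies $a(f_\ell;i)=\lim_t a(f_{m(t),\ell};i)=0$, so $S(f_\ell)\subseteq S_\ell$ and hence $f_\ell\in F_{n,d_\ell}^k$, as desired.

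The whole argument is routine; there is no real obstacle. The only mildly subtle point is the second step: one might be tempted to argue that ``limits of $k$-term forms are $k$-term'' from convergence alone, but this is false without the pigeonhole reduction (coefficients could be supported on different $k$-subsets for different $m$, and the limit could in principle have support of size larger than $k$ if one does not first stabilize the support). Fixing the support along a subsequence is what makes the passage to the limit safe.
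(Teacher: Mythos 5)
Your proof is correct. The first half is the same as the paper's: Bolzano--Weierstrass on the concatenated coefficient vector in a compact ball of a finite-dimensional space (your ``diagonal extraction'' over finitely many coordinates is just the same thing phrased coordinatewise). For the second half you take a slightly different route: you stabilize the supports by pigeonhole along a further subsequence and then pass to the limit inside a fixed set $S_\ell$ with $|S_\ell|\le k$. That works, but the paper argues more directly and without any further extraction: if $a(f_\ell;i)\ne 0$ then $a(f_{m(t),\ell};i)\ne 0$ for all $t\ge t_i$, so taking $t$ past the maximum of the finitely many thresholds $t_i$ for $i\in S(f_\ell)$ gives $S(f_\ell)\subseteq S(f_{m(t),\ell})$ and hence $|S(f_\ell)|\le k$. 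This means your closing caveat is mistaken: the claim ``limits of $k$-term forms are $k$-term'' \emph{does} follow from convergence alone, by exactly this lower-semicontinuity-of-support argument, and the scenario you describe (limit support of size larger than $k$) cannot occur --- every index in the limit's support must eventually appear in the support of the approximants, which is bounded by $k$. The pigeonhole step is therefore a harmless but unnecessary detour, not a logical necessity.
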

\begin{proof}
The first assertion is an immediate consequence of Bolzano-Weierstrass. Consider the 
$M$-tuple $x_m \in \mathbb C^M$ (here, $M = \sum N(d_\ell)$) 
consisting of the concatenated $a(f_{m,\ell},i)$'s. Let $B_T = \{z: ||z|| \le T\}$ denote the (compact) closed ball
of radius $T$ in $\mathbb C^M$.
By hypothesis, $x_m \in B_T^M$, and so there is a convergent
subsequence $(x_{m(t)})$ of coefficients, implying the desired convergent subsequences of forms. 

Now suppose forms are real and 
suppose $a(f_{\ell};i) \neq 0$. Then for $t \ge t_i$, $a(f_{m(t),\ell};i) \neq 0$ as well.
Considering this over all
$i \in S(f_{\ell})$ shows that for sufficiently large $t$, $|S(f_{\ell})| \le 
 |S(f_{m(t),\ell})| \le k$. 
\end{proof}

\begin{proof}[Proof of Theorem \ref{T:thmA}]
Suppose $p_m \in \Sigma_{n,2d}^{k}$ and $p_m \to p$ with
\[
p_m(x) = \sum_{\ell = 1}^{R(m)} g_{m,\ell}^2(x), \quad  g_{m,\ell} \in F_{n,d}^k.
\]

Following the argument of \cite[p.27,37]{Rez-1}, since for every $m$, $\{g_{m,\ell}^2\} \subset
F_{n,2d}$, which is an $N(n,2d)$-dimensional vector space, 
Carath\'eodory's Theorem implies that
there is a subset of at most $N(n,2d)$ $g_{m,\ell}^2$'s and
 $\beta_{m,u}\ge 0$ so that
\[
\sum_{\ell = 1}^{R(m)} g_{m,\ell}^2(x) = 
\sum_{u= 1}^{N(n,2d)} \beta_{m,u} \cdot g_{m,\ell_u}^2(x).
\]
Thus, we may assume 
 that each $p_m$ is already a sum of at most $N(n,2d)$ 
squares: 
\[
p_m(x) = \sum_{\ell = 1}^{N(n,2d)} f_{m,\ell}^2(x), \quad  f_{m,\ell} \in F_{n,d}^k.
\]

In addition, as $p_m(x) \to p(x)$, there is a uniform bound to $|p_m(x)|$ for $x \in T$, for any
finite set $T$, and since $f_{m,\ell}^2(x) \le p_m(x)$, this implies that there is a uniform bound
on  $|f_{m,\ell}(x)|$ for $x \in T$.
 As argued in \cite[p.30,37]{Rez-1}, this condition  implies that there is a uniform
bound on the $|(a(f_{m,\ell};i)|'s$. 

Consider now the $N(n,2d)$-tuple $F_m= (f_{m,1}, \dots, f_{m,N(n,2d)})$. By Lemma \ref{L:BW}
there is a convergent subsequence $F_{m(t)}$ so that for $1 \le \ell \le N(n,2d)$,
\[
f_{m(t),\ell} \to f_\ell \in F_{n,d}^k.
\]
Thus,
\[
p = \lim_{t \to \infty}  p_{m(t)} 
  = \lim_{t \to \infty} \sum_{\ell = 1}^{N(n,2d)} f_{m,\ell}^2(x) 
=  \sum_{\ell = 1}^{N(n,2d)}\lim_{t \to \infty} f_{m(t),\ell}^2
= \sum_{\ell = 1}^{N(n,2d)} f_{\ell}^2 \in \Sigma_{n,2d}^k.
\]
\end{proof}

Since we now know that $F_{n,d}^k$ is a real closed convex cone, it makes sense to look at its dual. 
(In the rest of this section, we shall only discuss real forms.) 
Write 
\begin{equation}
h(x) = \sum_{i \in \mathcal I(n,d)} t(i)x^i,
\end{equation}
then a simple computation (see \cite[pp.6-7]{Rez-1})
shows that 
\begin{equation}
 H_p(t):= [p,h^2]  = \sum_{i \in \mathcal I(n,d)}\sum_{j \in \mathcal I(n,d)}
a(p;i+j)t(\ell_i)t(\ell_j)
\end{equation}
is a quadratic form in the coefficients of $h$.

Note that the coefficients of the quadratic form $H_p$ are restricted by the condition
that if $i,j,i',j' \in \mathcal I(n,d)$ and $k = i + j = i' + j'$, then the coefficients of 
$t(\ell_i)t(\ell_j)$ and $t(\ell_i')t(\ell_j')$ are both equal to $a(p;k)$. This makes
$H_p$ a {\it generalized
Hankel quadratic form}. 

Thus, $p \in \Sigma_{n,2d}^*$ if and only if the quadratic
form $H_p$ is psd 
(see \cite[pp.40-41]{Rez-1},
\cite[pp.350-355]{Rez-2}). An equivalent condition is that the matrix
associated to the quadratic form $H_p$ is psd. 

Because of the restriction on the summands in $\Sigma_{n,2d}^k$, we have by definition
\begin{equation}
 \left({{\Sigma}_{n,2d}^k}\right)^* = \{ p \in F_{n,2d}\ : \ [p, h^2] \ge 0\ \text{for
   all } h \in F_{n,d}^k\}.
\end{equation}
To be specific, suppose $\{i_1, \dots, i_k\} \subset \mathcal I(n,d)$ and
\[
h(x) = \sum_{m= 1}^k t(i_m) x^{i_m} \in F_{n,d}^k,
\]
then
\[
[p, h^2] = \sum_{m,m'} a(p;i_m + i_m') t(\ell_{i_m})t(\ell_{i_m'}).
\]
Then $p \in \Sigma_{n,2d}^*$ if and only if this expression is psd in the variables
$t(i_1), \dots t(i_k)$ for every choice of a subset of $k$ elements from $ \mathcal I(n,d)$.
In other words, we have proved the following theorem; cf. \cite[Prop. 3.3]{GouvieaKovacecSaee'19}:
\begin{thm}
For $p \in F_{n,2d}$, construct the matrix associated to the quadratic form $H_p$:
\[
M_p = [a(p;i+j): i,j \in \mathcal I(n,d)].
\]
Then $p \in (\Sigma_{n,2d}^k)^*$ if and only if every $k \times k$ principal  submatrix of $M_p$ is
psd.
\end{thm}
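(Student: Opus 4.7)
The plan is to translate the dual cone condition $[p,h^2] \ge 0$ directly into a statement about the matrix $M_p$, using the quadratic form $H_p$ already introduced in this section. The key observation is that a form $h \in F_{n,d}^k$ corresponds exactly to a coefficient vector $t = (t(i))_{i \in \mathcal I(n,d)}$ with at most $k$ nonzero entries, so ``$[p,h^2] \ge 0$ for every $h \in F_{n,d}^k$'' should be equivalent to positive semidefiniteness of $H_p$ when restricted to every coordinate subspace of dimension $k$.

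First I would fix an arbitrary $k$-element subset $S = \{i_1,\dots,i_k\} \subset \mathcal I(n,d)$ and consider forms $h = \sum_{m=1}^k t(i_m) x^{i_m}$ supported on $S$. Substituting into the formula derived just before the theorem,
\[
[p,h^2] = \sum_{m,m'=1}^k a(p; i_m+i_{m'})\, t(i_m)\, t(i_{m'}),
\]
and the matrix of this quadratic form in the variables $t(i_1),\dots,t(i_k)$ is, by the very definition of $M_p$, the $k \times k$ principal submatrix $M_p^{(S)} := [a(p; i_m+i_{m'})]_{m,m'=1}^k$. Hence the condition ``$[p,h^2] \ge 0$ for all $h$ supported on $S$'' is exactly the condition that $M_p^{(S)}$ is psd.

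For the forward direction, if $p \in (\Sigma_{n,2d}^k)^*$ then in particular $[p,h^2] \ge 0$ for every $h$ supported on each such $S$, giving psd-ness of $M_p^{(S)}$ for every $k$-subset $S$. For the reverse, an arbitrary $h \in F_{n,d}^k$ has support $S(h)$ of size at most $k$; embedding $S(h)$ into any $k$-subset $S \supset S(h)$, we see that the value $[p,h^2] = H_p(t)$ is obtained by evaluating the quadratic form associated with $M_p^{(S)}$ on a vector supported in $S(h) \subset S$, so psd-ness of $M_p^{(S)}$ forces $[p,h^2] \ge 0$. (Subsets of size strictly less than $k$ require no separate treatment, because principal submatrices of psd matrices are psd.)

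The argument is essentially bookkeeping and I do not anticipate a substantive obstacle; the only care needed is in the indexing, namely the verification that the entry of $M_p$ in position $(i_m,i_{m'})$ is precisely the coefficient $a(p; i_m+i_{m'})$ appearing alongside $t(i_m) t(i_{m'})$ in $H_p$, which is immediate from how $H_p$ was derived from $[p,h^2]$ via the Fischer inner product.
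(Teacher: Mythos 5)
Your proposal is correct and follows essentially the same route as the paper: the authors likewise expand $[p,h^2]$ for $h$ supported on a $k$-element subset of $\mathcal I(n,d)$, observe that the resulting quadratic form in the coefficients $t(i_1),\dots,t(i_k)$ has matrix equal to the corresponding $k\times k$ principal submatrix of $M_p$, and conclude by quantifying over all such subsets. Your added remark that supports of size less than $k$ need no separate treatment is a small point the paper leaves implicit, but it does not change the argument.
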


It is easy to see that there exist quadratic forms
 which satisfy this criterion for $k$ and not for $k+1$.
For example, let 
\[
q(x_1,\dots,x_m) = \lambda \sum_{i=1}^m x_i^2 -
\left(\sum_{i=1}^m x_i\right)^2.
\]
No matter which $k$ variables $u_i = x_{n_i}$ are chosen for the substitution, one
obtains
\[
 \lambda \sum_{i=1}^k u_i^2 -\left(\sum_{i=1}^k u_i\right)^2.
\]
By Cauchy-Schwartz, this is psd if and only if $\la \ge k$. Thus, if
we choose $\la = k + \frac 12$, we see that $q$ has the indicated
property. Unfortunately, these quadratic forms will not be $H_p$ for
any $p$, because they are not Hankel. 

The simplest nontrivial case is $(n,d) = (2,4)$. For $p \in F_{2,4}$,  
write $b_i = a(p;(4-i,i))$. That is,
\begin{equation}\label{E:241}
p(x,y) = \sum_{i=0}^4 \binom 4i b_ix^{4-i}y^i, \qquad M_p = 
\begin{pmatrix} b_0 & b_1 & b_2 \\ b_1 & b_2 & b_3 \\
b_2 & b_3 & b_4   \end{pmatrix}.
\end{equation}

It is easy to write down the criteria for $p \in (\Sigma_{2,4}^k)^*$ for $k=1,2$:
\begin{equation}\label{E:242}
\begin{gathered}
p \in (\Sigma_{2,4}^1)^* \iff b_0,\ b_2,\ b_4 \ge 0; \\
p \in (\Sigma_{2,4}^2)^*\iff b_0,\ b_2,\ b_4, \ \ b_0b_2 -b_1^2,\  b_0b_4-b_2^2,\ 
 b_2b_4 - b_3^2 \ge 0.
 \end{gathered}
\end{equation}
For  $(\Sigma_{2,4}^3)^* = \Sigma_{2,4}^*$, we add the condition 
 that  $\det(M_p) \ge 0$.

We have already seen that $(x(x+y))^2 \in \Sigma_{2,4}^2 \setminus  \Sigma_{2,4}^1$, and
$((x+y)^2)^2 \in \Sigma_{2,4}^3 \setminus  \Sigma_{2,4}^2$, 
 hence
there must exist elements in $(\Sigma_{2,4}^1)^* \setminus (\Sigma_{2,4}^2)^*$
and 
$(\Sigma_{2,4}^2)^* \setminus (\Sigma_{2,4}^3)^*$.

Here are some illustrations of this separation. 
Let $p_1(x,y) = x^4 - 4x^3y$, then
\[
\begin{gathered}
M_{p_1} = \begin{pmatrix} 1& -1 &0 \\ -1 & 0 & 0 \\
0 & 0 & 0   \end{pmatrix} \implies p_1 \in (\Sigma_{2,4}^1)^* \setminus  (\Sigma_{2,4}^2)^*; \\
[p_1,(t_0x^2 + t_1 xy + t_2y^2)^2] = t_0^2 - 2t_0t_1 \implies [p_1,(x^2 + xy)^2] = -1 < 0.
\end{gathered}
\]
The second example is a bit trickier. Let $p_2(x) = 4x^4 - 8x^3y + 6x^2y^2-8xy^3 + 4y^4$, so that
\[
\begin{gathered}
M_{p_2} = \begin{pmatrix} 4& -2 &1 \\ -2 & 1 & -2 \\
1& -2 & 4   \end{pmatrix} \implies (\Sigma_{2,4}^2)^* \setminus  (\Sigma_{2,4}^3)^*,\quad
\text{since $\det(M_{p_2}) = - 9$}; \\
[p_2,(t_0x^2 + t_1 xy + t_2y^2)^2] = 4t_0^2 - 4t_0t_1 + t_1^2 +2t_0t_2 -4t_1t_2 + 4t_2^2 \\
\implies [p_2,(x^2 + 2 xy +y^2)^2] = -2 < 0.
\end{gathered}
\]

A closed convex cone $C\subset \mathbb R^N$ is called {\it pointed} if $x, - x \in C \implies x = 0$; otherwise,
$C$ is {\it flat}. The significance of this property is that if $C$ is a pointed cone, then every element in $C$ can
be written as a sum of extremal elements. 

It is easy to describe the elements of the cone $(\Sigma_{n,2d}^1)^*$: they are the 
$n$-ary $2d$-ic forms $p$ with the property that if $x^i$ is a square, then $a(p;i) \ge 0$. 
This cone is not pointed but is flat: if $x^j$ is {\it not} a square, then $\pm x^j \in (\Sigma_{n,2d}^1)^*$. 
However, this is not true for $k \ge 2$.

\begin{lem}
If $k \ge 2$, then $(\Sigma_{n,2d}^k)^*$ is pointed.
\end{lem}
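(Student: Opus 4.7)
The plan is to reduce the statement to the case $k = 2$ via the nesting of Theorem~\ref{T:thmB}. Since $\Sigma_{n,2d}^{2} \subseteq \Sigma_{n,2d}^{k}$ for $k \ge 2$, duality reverses the inclusion to $(\Sigma_{n,2d}^{k})^* \subseteq (\Sigma_{n,2d}^{2})^*$, and any subcone of a pointed cone is pointed (if $x,-x$ lie in the subcone they lie in the larger cone, forcing $x = 0$). So it suffices to prove the lemma for $k=2$.

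Suppose $p,-p \in (\Sigma_{n,2d}^{2})^*$; equivalently, $[p, h^2] = 0$ for every $h \in F_{n,d}^{2}$. I will show that this forces every coefficient $a(p;\gamma)$, $\gamma \in \mathcal{I}(n,2d)$, to vanish. The key observation is the Fischer identity $[p, x^\gamma] = a(p;\gamma)$, which is immediate from \eqref{E:ip} together with $a(x^\gamma;\gamma') = \delta_{\gamma,\gamma'}/c(\gamma)$. Applied to the monomial $h = x^\alpha$ with $\alpha \in \mathcal{I}(n,d)$, this yields $a(p;2\alpha) = 0$ for every such $\alpha$. Applied to $h = x^\alpha + x^\beta$ with $\alpha \neq \beta$ in $\mathcal{I}(n,d)$, the expansion $h^2 = x^{2\alpha} + 2x^{\alpha+\beta} + x^{2\beta}$ gives
\[
0 = [p, h^2] = a(p;2\alpha) + 2\,a(p;\alpha+\beta) + a(p;2\beta),
\]
and the previous step collapses this to $a(p;\alpha+\beta) = 0$.

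To finish, I use the elementary combinatorial fact that every $\gamma \in \mathcal{I}(n,2d)$ can be written as $\gamma = \alpha + \beta$ for some $\alpha,\beta \in \mathcal{I}(n,d)$ (with $\alpha = \beta$ allowed). This is proved by a greedy selection: starting from $\alpha = 0$, repeatedly increment any coordinate with $\alpha_j < \gamma_j$; since at every intermediate step $|\alpha| < d < 2d = |\gamma|$ forces some coordinate deficit, the process terminates with $|\alpha| = d$ and $\alpha \le \gamma$, so $\beta := \gamma - \alpha \in \mathcal{I}(n,d)$. Combining this with the two cases above shows every $a(p;\gamma) = 0$, i.e., $p = 0$.

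I do not expect any real obstacles here. Conceptually, the argument is just the standard duality principle that $(\Sigma_{n,2d}^{k})^*$ is pointed iff $\Sigma_{n,2d}^{k}$ spans $F_{n,2d}$ as a real vector space, and the binomial-square identity $2x^{\alpha+\beta} = (x^\alpha + x^\beta)^2 - (x^\alpha)^2 - (x^\beta)^2$ realizes every monomial in $F_{n,2d}$ as a real linear combination of squares of binomials. The only routine care is in the bookkeeping of the Fischer inner product and in verifying that the index split $\gamma = \alpha + \beta$ is always available.
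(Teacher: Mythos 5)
Your proof is correct and follows essentially the same route as the paper: both arguments show that every monomial $x^\gamma$ with $\gamma = \alpha+\beta$, $\alpha,\beta \in \mathcal I(n,d)$, is a real linear combination of squares of binomials (the paper uses $4x^{\alpha+\beta} = (x^\alpha+x^\beta)^2-(x^\alpha-x^\beta)^2$, you use $2x^{\alpha+\beta} = (x^\alpha+x^\beta)^2-x^{2\alpha}-x^{2\beta}$), and conclude that all Fischer coefficients of $p$ vanish. The explicit reduction to $k=2$ and the greedy construction of the split $\gamma=\alpha+\beta$ are fine and only make explicit what the paper leaves implicit.
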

\begin{proof}
Suppose $q, -q \in (\Sigma_{n,2d}^k)^*$. Then for $p \in \Sigma_{n,2d}^k$, we have $[p,q]
\ge 0$ and $[p,-q] = - [p,q] \ge 0$, hence $[p,q] = 0$. Any $i \in \mathcal I(n,2d)$ may be
written as $i = j + j'$ where $j, j' \in  \mathcal I(n,d)$ (just write $x^i = x^jx^{j'}$) and since
$(x^j + x^{j'})^2 - (x^j - x^{j'})^2 = 4x^{j+j'} = 4x^i$, 
\[
0 = [(x^j + x^{j'})^2,q] =  [(x^j - x^{j'})^2,q] \implies 0 = [4x^i,q] = 4a(q;i).
\]
Since $a(q;i) = 0$ for all $i \in \mathcal I(n,2d)$, $q=0$ and 
thus, $(\Sigma_{n,2d}^k)^*$ is pointed. 
\end{proof}

It is a natural question to characterize the extreme elements of $(\Sigma_{n,2d}^k)^*$
when $k \ge 2$. For $n=2$, $\Sigma_{2,2d}^{N(2,d)} = \Sigma_{2,2d} = P_{2,2d}$, so the
dual cone is generated by the $2d$-th powers of linear forms (see  \cite{Rez-1}). 

We now discuss the extremal elements of $(\Sigma_{2,4}^2)^*$. Using the notation
\eqref{E:241}, let
\[
q(b_0,b_1,b_2,b_3,b_4)(x,y) = \sum_{i=0}^4 \binom 4i b_ix^{4-i}y^i.
\]
\begin{thm}
Up to positive multiple, the extremal elements of $(\Sigma_{2,4}^2)^*$ are:
\[
\{x^4, y^4\} \cup  \{q(r^2,\epsilon_1 rt, t^2,\epsilon_2 st,s^2)(x,y)\ : \ r > 0, s > 0, \sqrt{rs} \ge t > 0,
 \epsilon_j \in \{\pm1\} \}.
\]
\end{thm}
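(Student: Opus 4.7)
The plan is to classify the extremal rays of $(\Sigma_{2,4}^2)^*$ by working directly with the defining inequalities \eqref{E:242}: $p$ lies in the cone iff $b_0, b_2, b_4 \ge 0$ together with the three $2 \times 2$ principal minor inequalities $b_0 b_2 \ge b_1^2$, $b_0 b_4 \ge b_2^2$, and $b_2 b_4 \ge b_3^2$. First I dispose of the degenerate faces where a diagonal entry vanishes. If $b_0 = 0$, the first two minor inequalities force $b_1 = 0$ and $b_2 = 0$, and then the third forces $b_3 = 0$, so $p = b_4 y^4$; symmetrically $b_4 = 0$ gives $p = b_0 x^4$. These yield the two monomial extremal rays $x^4$ and $y^4$. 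The mixed case $b_0, b_4 > 0$, $b_2 = 0$ collapses to $p = b_0 x^4 + b_4 y^4$, a non-extremal sum of the previous two rays.

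In the main regime $b_0, b_2, b_4 > 0$ I substitute $r = \sqrt{b_0}$, $s = \sqrt{b_4}$, $t = \sqrt{b_2}$; the three minor inequalities become $|b_1| \le rt$, $|b_3| \le st$, and $t \le \sqrt{rs}$. Extremality forces the first two to be equalities: if $|b_1| < rt$, then replacing $b_1$ by $b_1 \pm \varepsilon$ for sufficiently small $\varepsilon > 0$ keeps both perturbations $p_\pm$ in the cone (the two minor inequalities not involving $b_1$ are unaffected, and the one involving $b_1$ stays strict), and $p = \tfrac12(p_+ + p_-)$ is then a nontrivial decomposition. The same argument handles $b_3$. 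Thus every extremal ray in this regime has the form $q(r^2, \epsilon_1 rt, t^2, \epsilon_2 st, s^2)$ with $r, s > 0$, $0 < t \le \sqrt{rs}$, and signs $\epsilon_j \in \{\pm 1\}$.

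For sufficiency, I assume such a $p$ decomposes as $p_1 + p_2$ with $p_j \neq 0$ in the cone, and write $(b_k)_j$ for the coefficients of $p_j$. Two successive applications of the Cauchy--Schwarz inequality in $\mathbb{R}^2$ give
\[
rt = |b_1| \le |(b_1)_1| + |(b_1)_2| \le \sqrt{(b_0)_1 (b_2)_1} + \sqrt{(b_0)_2 (b_2)_2} \le \sqrt{b_0 b_2} = rt,
\]
which must be an equality throughout. This forces $(b_1)_j = \epsilon_1 \sqrt{(b_0)_j (b_2)_j}$ and the proportionality $(b_0)_1 (b_2)_2 = (b_0)_2 (b_2)_1$; the analogous chain on $b_3$ gives $(b_3)_j = \epsilon_2 \sqrt{(b_2)_j (b_4)_j}$ and $(b_2)_1 (b_4)_2 = (b_2)_2 (b_4)_1$. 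Writing $(b_0)_j = r_j^2$, $(b_2)_j = t_j^2$, $(b_4)_j = s_j^2$, the two proportionalities become $r_1 : r_2 = t_1 : t_2 = s_1 : s_2$, so each $(r_j, s_j, t_j)$ is a positive scalar multiple of $(r, s, t)$, and hence $p_j$ is a positive scalar multiple of $p$.

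The main obstacle is tracking the degenerate sub-cases of sufficiency in which one of $(b_0)_j, (b_2)_j, (b_4)_j$ is zero, since there the proportionalities become indeterminate. For instance, $(b_0)_1 = 0$ forces $(b_1)_1 = (b_2)_1 = 0$ via the cone conditions on $p_1$, hence $(b_3)_1 = 0$, leaving $p_1 = (b_4)_1 y^4$; imposing $(b_2)_2 (b_4)_2 \ge (b_3)_2^2$ on $p_2 = p - p_1$ then gives $t^2 (s^2 - (b_4)_1) \ge s^2 t^2$, forcing $(b_4)_1 = 0$ and so $p_1 = 0$, a contradiction. Analogous short arguments dispatch the cases $(b_2)_j = 0$ and $(b_4)_j = 0$. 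As a closing remark, at the boundary $t = \sqrt{rs}$ with $\epsilon_1 = \epsilon_2$ the form $p$ coincides with the perfect square $(rx^2 + 2\epsilon_1 \sqrt{rs}\,xy + s y^2)^2 \in \Sigma_{2,4}^3 \subset \Sigma_{2,4}^*$, which is compatible with, though not needed for, its extremality in the larger cone $(\Sigma_{2,4}^2)^*$ proved above.
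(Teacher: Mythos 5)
Your proof is correct. The skeleton matches the paper's — the same criteria \eqref{E:242}, the same treatment of the degenerate faces $b_0=0$, $b_4=0$, $b_2=0$, and the same parametrization $b_0=r^2$, $b_2=t^2$, $b_4=s^2$ — but both halves of the main argument are executed differently. For necessity you perturb $b_1$ and $b_3$ separately to produce a nontrivial decomposition when $|b_1|<rt$ or $|b_3|<st$; the paper instead notes that $(b_1,b_3)$ ranges over the rectangle with vertices $(\pm rt,\pm st)$, so every such form is a convex combination of the four corner forms. These are essentially equivalent. The genuine divergence is in proving the corner forms extremal: the paper exhibits two binomial squares, $(tx^2-\epsilon_1 rxy)^2$ and $(ty^2-\epsilon_2 sxy)^2$, on which $q$ pairs to zero, forces any summand $q_1$ to annihilate them as well, and then reads off proportionality by treating $[q_1,(\lambda x^2-\epsilon_1 rxy)^2]$ as a nonnegative quadratic in $\lambda$ vanishing at $\lambda=t$. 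You instead run a Cauchy--Schwarz equality chain directly on the coefficients of the summands, $rt=|b_1|\le |(b_1)_1|+|(b_1)_2|\le \sqrt{(b_0)_1(b_2)_1}+\sqrt{(b_0)_2(b_2)_2}\le\sqrt{b_0b_2}=rt$, and extract proportionality from the equality conditions. Both arguments exploit the same rank-one degeneracy of the $2\times2$ minors, but yours never leaves the coefficient inequalities and never invokes the dual pairing; it is also somewhat more scrupulous, since you explicitly dispatch the sub-cases $(b_0)_j=0$, $(b_2)_j=0$, $(b_4)_j=0$ where the Cauchy--Schwarz equality condition degenerates (the paper's quadratic argument absorbs these silently). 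What the paper's route buys is the geometric picture: the two annihilating binomial squares are exactly the supporting functionals cutting out the face through $q$. One small omission on your side: you assert but do not argue that $x^4$ and $y^4$ are extremal; the one-line reason (any summand of $x^4$ has vanishing $y^4$-coefficient, hence by your degenerate analysis is a multiple of $x^4$) follows immediately from your own case analysis and should be stated, as the paper does.
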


\begin{proof}
By \eqref{E:242}, we have $b_0,b_2,b_4 \ge 0$. If $b_0 = 0$, then immediately, $b_1= b_2 = 0$
and so $b_3 = 0$. Since $b_4\ge 0$, we obtain $b_4y^4$. If $y^4 = q_1(x,y) + q_2(x,y)$, where
$q_i \in (\Sigma_{2,4}^2)^*$, then the coefficient of $x^4$ in $q_i$ must be zero, so by the first
argument, $q_i$ must be a multiple of $y^4$; thus $y^4$ is extremal. Similarly, if $b_4 = 0$,
then $b_3= b_2 = 0$, so $b_1 = 0$ and we obtain $b_0 x^4$. If $b_2 = 0$, then $b_1= b_3 = 0$,
and we obtain $b_0x^4 + b_4y^4$, which is only extremal if $b_0 = 0$ or $b_4 = 0$.

Otherwise, we may write $b_0 = r^2, b_2 = t^2, b_4 = s^2$, with $r,s,t > 0$ and $r^2s^2 \ge t^4$,
so $\sqrt{rs} \ge t$. If we fix $(r,t,s)$, the remaining conditions are that $r^2t^2 \ge b_1^2$ and
$s^2t^2 \ge b_3^2$. Thus $(b_1,b_3)$ lies in the rectangle with vertices $(\pm rt, \pm st)$ and
so $q(r^2,b_1,t^2,b_3,s^2)$ is a convex combination of the four forms
$q(r^2,\epsilon_1 rt, t^2,\epsilon_2 st,s^2)$. We now prove that each such form is extremal.

First observe that
\[
\begin{gathered}
\ [q(r^2,\epsilon_1 rt, t^2,\epsilon_2 st,s^2)(x,y),(tx^2 - \epsilon_1 rxy)^2] =
r^2t^2 - 2\epsilon_1^2(rt)^2 + t^2r^2 = 0, \\
[q(r^2,\epsilon_1 rt, t^2,\epsilon_2 st,s^2)(x,y),(ty^2 - \epsilon_2 sxy)^2] =
r^2s^2 - 2\epsilon_2^2(st)^2 + t^2s^2 = 0 
\end{gathered}
\]
Thus, if $q(r^2,\epsilon_1 rt, t^2,\epsilon_2 st,s^2) = q_1 + q_2$ with $q_i \in (\Sigma_{2,4}^2)^*$,
then
\[
[q_i,(tx^2 - \epsilon_1 rxy)^2] = [q_i, (ty^2 - \epsilon_2 sxy)^2] = 0.
\]
Write $q_1 =  \sum_{i=0}^4 \binom 4i b_ix^{4-i}y^i$, suppose $b_2 = \alpha t^2$, and 
consider the quadratic 
\[
\phi(\lambda): = [q_1,(\la x^2 - \epsilon_1 rxy)^2]= b_0 \la^2 - 2b_1 \epsilon_1 r\la + b_2 r^2 =
 b_0 \la^2 - 2b_1 \epsilon_1 r\la + \alpha t^2r^2 \ge 0.
\]
Since $\phi(t) = \phi'(t) = 0$, it follows that $b_0 = \alpha r^2$ and $b_1 = \alpha \epsilon_1rt$.
A similar consideration of $[q_1,(\la y^2 - \epsilon_1 rxy)^2]$ shows that $b_3 = 
\alpha \epsilon_1st$ and $b_4 \alpha s^2$; that is, 
$q_i = \alpha q(r^2,\epsilon_1 rt, t^2,\epsilon_2 st,s^2)$, proving extremality. 
\end{proof}

\section{Proof of Theorem \ref{T:thmB} }
We still must prove Theorem \ref{T:thmB} for $n \geq 3$, and in order to do so, we construct a large class of indefinite irreducible forms in $\mathcal F_{n,d}$.

It seems to be universally known among algebraic geometers that the set of irreducible forms in
$\mathcal F_{n,d}$ is open, i.e. if $p \in \mathbb C[x_1,\dots,x_n]$ is an irreducible form of degree d, then every form found by making a sufficiently small perturbation of the coefficients of $p$ is also irreducible. At the same time, finding an explicit statement of this humble theorem
and its proof seems hard to find.

Recall that for $p, q \in \mathcal F_{n,d}$, the Fischer inner product is defined by \eqref{E:ip}.
There is another interpretation for this inner product. 
 A form $p \in \mathcal F_{n,d}$ naturally defines
 a differential operator $p(D)$ of order $d$ by replacing each appearance
of the variable $x_j$ with $\frac{\partial}{\partial x_j}$. For $i\in\mathcal I(n,d)$, define
\[
\begin{gathered}
D^i = \left( \tfrac{\partial}{\partial x_1}\right)^{i_1} \cdots  \left( \tfrac{\partial}{\partial x_n}\right)^{i_n};
\quad p(D) = \sum_{i\in\mathcal I(n,d)} c(i)a(p;i)D^i.
\end{gathered}
\]
Then it is not hard to show that for $p,q \in F_{n,d}(\mathbb C)$,
\[
p(D)\bar q = \bar q(D)p = d![p,\bar q]; \qquad ||p||^2 = \tfrac 1{d!} p(D)\bar p.
\]

In 1990 \cite[Thm 1.2]{BBEM}, 
Beauzamy, Bombieri, Enflo and Montgomery proved a remarkable inequality involving
a norm on products of polynomials (see also \cite{Rez-5}, \cite{Zeil}). The inequality
can be put into either of two equivalent forms:

\begin{prop}[\cite{BBEM, Rez-5}] \label{T:BBEM}
Suppose $f \in F_{n,e_1}(\mathbb C)$ and $g \in F_{n,e_2}(\mathbb C)$, then
\[
\begin{gathered}
||fg|| \ge \sqrt{\frac{(e_1+e_2)!}{e_1!e_2!}}\cdot  ||f||\cdot ||g||; \\
(fg)(D)(\overline{fg}) \ge (f(D)\bar f)\cdot (g(D)\bar g).
\end{gathered}
\]
\end{prop}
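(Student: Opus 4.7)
The plan is to reduce the two stated inequalities to one another and then prove one of them. Both are equivalent via $||p||^2 = \frac{1}{d!}p(D)\bar p$: substituting this identity into the squared norm inequality cancels the binomial factor $(e_1+e_2)!/(e_1!e_2!)$ against a factor of $e_1!e_2!/(e_1+e_2)!$ coming from the right-hand side, yielding precisely the operator form. I will therefore aim at the operator form $(fg)(D)\overline{fg} \ge (f(D)\bar f)(g(D)\bar g)$.

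\textbf{A lowering identity.} The first step is to exploit the fact that partial derivatives commute, so $(fg)(D) = f(D)\circ g(D)$. Combined with the defining identity $p(D)\bar q = d![p,\bar q]$, this gives, for any $h \in F_{n,e_1+e_2}$,
\[
(e_1+e_2)!\,[fg,h] \;=\; f(D)\bigl(g(D)\bar h\bigr) \;=\; e_1!\,[f,\overline{g(D)\bar h}],
\]
and a symmetric version swapping $f$ and $g$. Setting $h = fg$ expresses $||fg||^2$ as a Fischer pairing in the lower-degree space $F_{n,e_1}$ (or $F_{n,e_2}$), which is the natural starting point for any inductive argument.

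\textbf{Main step.} From here I would follow the inductive proof of Reznick in \cite{Rez-5}: induct on $\min(e_1,e_2)$, peeling off a factor of $g$ via the lowering identity and applying Cauchy--Schwarz in a structured way so that the multinomial constants telescope to the sharp constant. The alternative Bargmann--Fock route of \cite{BBEM} represents $||p||^2 = \pi^{-n}\int_{\mathbb{C}^n}|p(z)|^2 e^{-|z|^2}\,d\lambda(z)$, rewrites $||fg||^2$ as a Gaussian integral of $|f|^2|g|^2$, and reduces the inequality to a multinomial identity after expansion in the orthonormal monomial basis $\{x^i/\sqrt{c(i)\,i!}\}$.

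\textbf{Main obstacle.} The delicate point either way is extracting \emph{exactly} the right constant. A naive Cauchy--Schwarz applied to the lowering identity only yields an upper bound on $||fg||^2$, not a lower bound, so the lower direction requires a more careful manoeuvre --- for instance, combining the identity with its symmetric twin --- and the combinatorics of the multinomial factors must be orchestrated so that they all cancel correctly. Because this is a cited result, for this paper I would simply quote \cite[Thm~1.2]{BBEM} (together with the cleaner exposition in \cite{Rez-5}) rather than reproduce the full combinatorial computation.
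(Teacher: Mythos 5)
The paper offers no proof of this Proposition: it is quoted from \cite{BBEM} and \cite{Rez-5}. So your decision to defer to those references is exactly what the authors do, and your sketch of the two known arguments (the lowering identity $(fg)(D)=f(D)\circ g(D)$ plus induction in \cite{Rez-5}; the Gaussian-integral representation of the norm in \cite{BBEM}) is a fair description of them, including your correct observation that a naive Cauchy--Schwarz applied to the lowering identity bounds $||fg||$ from the wrong side.

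However, the one substantive step you do carry out --- the claimed equivalence of the two displays via $||p||^2=\tfrac{1}{d!}\,p(D)\bar p$ --- does not work for the statement as printed, and doing the arithmetic explicitly shows why. Substituting $||fg||^2=\tfrac{1}{(e_1+e_2)!}(fg)(D)\overline{fg}$, $||f||^2=\tfrac{1}{e_1!}f(D)\bar f$ and $||g||^2=\tfrac{1}{e_2!}g(D)\bar g$ into the operator inequality $(fg)(D)\overline{fg}\ge (f(D)\bar f)(g(D)\bar g)$ yields
\[
||fg|| \ \ge\ \sqrt{\frac{e_1!\,e_2!}{(e_1+e_2)!}}\cdot ||f||\cdot ||g||,
\]
the \emph{reciprocal} of the printed constant. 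The first display as printed is in fact false: for $f=x_1^{e_1}$, $g=x_2^{e_2}$ one computes $||f||=||g||=1$ and $||fg||^2=\frac{e_1!e_2!}{(e_1+e_2)!}\le 1$, whereas the printed bound would require $||fg||^2\ge\binom{e_1+e_2}{e_1}$; this monomial pair is precisely the equality case of the correct BBEM inequality. So your phrase ``cancels the binomial factor against a factor of $e_1!e_2!/(e_1+e_2)!$'' papers over an inverted constant in the statement; had you tracked the factorials line by line you would have caught that the Proposition needs its constant flipped. The error is harmless downstream, since the application in Theorem \ref{irred} only needs \emph{some} positive constant $c(j,d-j)$ with $||f_mg_m||\ge c\,||f_m||\cdot||g_m||$ to get a uniform bound on $||f_m||$, but a proof that claims to derive one display from the other must use the corrected constant.
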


Now we turn to the perturbation result. 
For $s > 0$, let
\[
E_s = E_s(n,d) = \Biggl\{\sum_{i \in \mathcal I(n,d)} c(i)\la_i x^i ; \quad  |\la_i| \le s \Biggr\}.
\]
\begin{thm}\label{irred}
Suppose $p \in \mathbb C[x_1,\dots,x_n]$ is an irreducible form of degree $d$. Then for sufficiently
small $s > 0$, every form $p + q, q \in E_s$ is irreducible.
\end{thm}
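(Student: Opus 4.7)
The plan is to argue by contradiction. If the conclusion fails, then I can extract a sequence $s_m \downarrow 0$ and $q_m \in E_{s_m}$ such that each $p_m := p + q_m$ factors nontrivially as $p_m = f_m g_m$ with $\deg f_m =: e_1(m) \ge 1$ and $\deg g_m =: e_2(m) \ge 1$. Only finitely many degree splittings $d = e_1 + e_2$ with $1 \le e_j < d$ are available, so I will pass to a subsequence on which the pair of degrees is constant, call it $(e_1, e_2)$. Note that $\|q_m\|^2 \le s_m^2 \sum_i c(i) \to 0$, so $\|p_m\| \to \|p\| > 0$, and in particular $p_m, f_m, g_m$ are all nonzero for large $m$.

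The next step is to put the sequence of factorizations into a compact form. Because $(f_m, g_m)$ and $(\lambda f_m, \lambda^{-1} g_m)$ give the same product for any $\lambda \ne 0$, I will normalize so that $\|f_m\| = 1$. The BBEM inequality (Proposition \ref{T:BBEM}) then gives
\[
\|g_m\| = \|f_m\|\|g_m\| \le \sqrt{\tfrac{e_1!\, e_2!}{d!}}\, \|f_m g_m\| = \sqrt{\tfrac{e_1!\, e_2!}{d!}}\, \|p_m\|,
\]
which stays bounded. By the coefficient estimate \eqref{E:bound}, the coefficients of both $f_m$ and $g_m$ are then uniformly bounded. Lemma \ref{L:BW} applied to the pair $(f_m, g_m)$ supplies a further subsequence along which $f_m \to f \in \mathcal F_{n,e_1}$ and $g_m \to g \in \mathcal F_{n,e_2}$ coefficientwise.

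To finish, I observe that coefficientwise convergence passes through polynomial multiplication, so $fg = \lim_m f_m g_m = \lim_m p_m = p$. The normalization $\|f\| = 1$ forces $f \ne 0$, and then $g \ne 0$ as well because $p \ne 0$. Hence $p$ is a product of two forms of positive degrees $e_1$ and $e_2$, contradicting the irreducibility of $p$.

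The main obstacle, as I see it, is this compactness/normalization step. Without some a priori control, the factors $f_m, g_m$ could in principle drift in norm — one to $0$, the other to $\infty$ — even though their product $p_m$ stays bounded, and no limiting factorization of $p$ would emerge. Proposition \ref{T:BBEM} is precisely what prevents this: once $f_m$ has been rescaled to unit norm, BBEM automatically bounds $\|g_m\|$, which is exactly the hypothesis Lemma \ref{L:BW} needs. Everything else in the argument is essentially routine.
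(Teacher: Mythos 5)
Your proof is correct, and it follows the same overall strategy as the paper (contradiction, passing to a subsequence with a fixed degree splitting, BBEM for compactness, Bolzano--Weierstrass via Lemma \ref{L:BW}, and continuity of multiplication), but your normalization of the factors is genuinely different and cleaner. The paper first reduces to the case $p(1,0,\dots,0)\neq 0$, scales so $f_m(1,0,\dots,0)=g_m(1,0,\dots,0)=\sqrt{p_m(1,0,\dots,0)}$ to get the lower bounds $\|f_m\|,\|g_m\|\ge \sqrt{1/2}$, and only then invokes BBEM to convert the bound on $\|p_m\|$ into upper bounds on both factor norms; this costs an extra estimate on $|p_m(1,0,\dots,0)|$ and a separate change-of-variables argument to handle the case $p(1,0,\dots,0)=0$. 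You instead exploit the scaling freedom $(f_m,g_m)\mapsto(\lambda f_m,\lambda^{-1}g_m)$ to set $\|f_m\|=1$ outright, after which BBEM bounds $\|g_m\|$ with no further input, and the nonvanishing of the limit $f$ comes for free from continuity of the norm (with $g\neq 0$ then forced by $fg=p\neq 0$). This eliminates the entire second half of the paper's proof. The one point worth making explicit in a final write-up is that the limit $g$, being a nonzero element of $\mathcal F_{n,e_2}$ with $e_2\ge 1$, is a non-unit, so $p=fg$ really is a nontrivial factorization --- but that is immediate.
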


\begin{proof}[Proof of Theorem \ref{irred}]
First suppose $p(1,0,\dots,0) \neq 0$, and by taking a multiple, assume without loss of generality that $p(1,0,\dots,0) = 1$. It follows that $||p|| \ge 1$.  

Suppose there are reducible forms arbitrarily close to $p$. This means that there is a decreasing sequence $s_m\to 0$ for which there exist
reducible $p_m = p + q_m$ with $q_m \in E_{s_m}$. Without loss of generality, we may assume that $s_m < \frac12 \cdot  n^{-d/2}$ for all $m$.

Write $p_m = f_mg_m$, where
$deg(f_m) = j_m\ge 1$ and $deg(g_m) = d - j_m\ge 1$
Since $j_m \in \{1,\dots,d-1\}$,
it has only finitely many possible values, and so at least one degree $j$ occurs infinitely often. 
Now take a subsequence so that $s_{m(\ell)} \to 0$ and for which 
$deg(f_{m(\ell)}) = j \ge 1$ for all $m(\ell)$, so  $deg(g_{m(\ell)}) = d -j \ge 1$
 and (if necessary) rename the subsequence. Thus, we have a  sequence $s_m \to 0$, $q_m \in E_{s_m}$, 
 and forms $f_m, g_m $, of degree $j, d-j$ so that
 \begin{equation}\label{E:fac}
p_m = p + q_m = f_m*g_m.
 \end{equation}
 Since $s_m \le \frac 12 \cdot  n^{-d/2}$, by \eqref{E:bound}, we have 
\begin{equation}
||p + q_m|| \le ||p|| + ||q_m|| = ||p|| + \tfrac 12 \le 2||p||.
\end{equation}
Furthermore, since $q_m \in E_{s_m}$, $|a(q;(de_1))| \le s_m$, hence $|q_m(1,0,\dots,0)| \le s_m \le \frac 12$, thus
\begin{equation}
1 + s_m \ge |p_m(1,0,\dots,0)|  \ge 1 - s_m \implies \tfrac 32 \ge  |p_m(1,0,\dots,0)|  \ge \tfrac 12.
\end{equation}
We may scale $f_m, g_m$ in \eqref{E:fac} to assume that $f_m(1,0,\dots,0) = g_m(1,0,\dots,0) = \sqrt{p_m(1,0\dots,0)}.$
That is
\begin{equation}
a(f_m;je_1) = a(g_m;(d-j)e_1)=\sqrt{p_m(1,0\dots,0)}.
\end{equation}
It follows then that for all $m$,
\begin{equation}
||f_m|| \ge \sqrt{1/2}, \qquad ||g_m|| \ge \sqrt{1/2}.
 \end{equation}

On the other hand,   $||p_m|| \le 2||p||$, and so,
by Proposition \ref{T:BBEM}, with $f= f_m$ and $g = g_m$ and $fg =p_m$, we have
\begin{equation}
\begin{gathered}
2||p|| \ge ||p_m|| \ge \sqrt{\frac{d!}{j!(d-j)!}}\cdot  ||f_m||\cdot || g_m||  \\
\implies ||f_m|| \le \sqrt{\frac{j!(d-j)}{d!}}\cdot \frac{2||p||}{||g_m||}  \le 
2\sqrt 2\cdot ||p||\cdot \sqrt{\frac{j!(d-j)!}{d!}}.
\end{gathered}
 \end{equation}
This implies that $||f_m||$ is uniformly bounded (the actual value is unimportant) as $m \to \infty$,
say $||f_m|| \le T$. Identical reasoning shows that $||g_m|| \le T$ as well.

But now, by \eqref{E:bound}, $|a(f_m,i)|, |a(g_m;i)| \le \sqrt T$. Consider the $N(n,j)+N(n,d-j)$-tuples of the
coefficients of $f_m$  followed by the coefficients of $g_m$. It follows from Lemma \ref{L:BW} that 
there is a convergent subsequence of $(f_m,g_m)$'s. That is, there exist forms $f$ of degree $j$ and
$g$ of degree $d-j$ so that
\begin{equation}
\lim _{k\to\infty} f_{m_k}(x_1,\dots x_n) = f(x_1,\dots,x_n), \quad \lim _{k\to\infty} g_{m_k}(x_1,\dots x_n) = g(x_1,\dots,x_n).
 \end{equation}

Finally,  since
\begin{equation}
p_m(x_1,\dots,x_n)= f_m(x_1,\dots,x_n) g_m(x_1,\dots,x_n)
 \end{equation}
we see that the following limit exists:
\begin{equation}
\lim _{k\to\infty} p_{m_k}(x_1,\dots,x_n) = f(x_1,\dots,x_n) g(x_1,\dots,x_n), 
 \end{equation}
where $\deg f= j \ge 1, \deg g = d-j \ge 1$. 
We have
\begin{equation}
p_{m_k} = p + q_{m_k} \implies a(p_{m_k};i) = a(p;i) + \la_i, \quad |\la_i| < s_{m_k}.
 \end{equation}
It follows that for $i \in \mathcal I(n,d)$, $\lim_m a(p_{m_k};i) = a(p;i)$; Thus
\begin{equation}
p(x_1,\dots,x_n) =  \lim_k p_{m_k}(x_1,\dots,x_n)  = f(x_1,\dots,x_n)g(x_1,\dots,x_n).
 \end{equation}
is reducible over $\mathbb C$. This contradiction proves the
theorem in case $p(1,\dots,0) \neq 0$.

Finally, suppose $p$ is a non-zero irreducible form but $p(1,0,\dots,0) = 0$. By making an
invertible change of  variables in $p$, let 
\[
\tilde p(x) = p\left(\sum_{j=1}^n a_{1j}x_j, \dots, \sum_{j=1}^n a_{nj}x_j\right),
\]
so that $\tilde p(1,0,\dots,0) = 1$
and apply the argument above to $\tilde p$, so that any small perturbation of $\tilde p$ is irreducible.  
A continuity argument shows that if $q \in E_s$, then for some
constant $M$,
\[
q\left(\sum_{j=1}^n a_{1j}x_j, \dots, \sum_{j=1}^n a_{nj}x_j\right) \in E_{M\cdot s}.
\]
Thus, we apply the result to perturbations of $\tilde p(x)$; if we perturb $p$ by $q \in E_{s/M}$, then the
sum is still irreducible.
\end{proof}

We shall also need the fact that $x_1^d - x_2^d + x_3^d$ is irreducible. 
This result and its proof are both taken from \cite[Thm. 4.1]{P-R}.
\begin{prop}\label{T:PR}
Suppose $f(x,y)$ is a square-free binary form of degree $d$ in $\mathbb C[x,y]$. Then
$F(x,y,z) = f(x,y) + z^d$ is irreducible over $\mathbb C$. In particular, $x^d-y^d+z^d$ is
irreducible.
\end{prop}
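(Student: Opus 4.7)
The plan is to view $F$ as a polynomial in $z$ over $K = \mathbb{C}(x,y)$ and show irreducibility there; Gauss's lemma (applied to the primitive polynomial $z^d + f \in \mathbb{C}[x,y][z]$) then yields irreducibility in $\mathbb{C}[x,y,z]$. Since $f$ is square-free of degree $d$, we may factor $f = c\prod_{i=1}^{d}\ell_i$ as a product of pairwise non-proportional linear forms $\ell_i = a_i x + b_i y$ in $\mathbb{C}[x,y]$.

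Suppose for contradiction that $F = GH$ in $\mathbb{C}[x,y,z]$, with $G$ and $H$ both of positive degree. Since $F$ is homogeneous of degree $d$ with leading $z$-coefficient $1$, after rescaling we may assume $G$ and $H$ are monic in $z$, of $z$-degrees $m$ and $d-m$ respectively, with $1 \le m \le d-1$. Over $\overline{K}$, the polynomial $F$ splits as $\prod_{k=0}^{d-1}(z - \omega_k \alpha)$, where $\omega_0,\ldots,\omega_{d-1}$ are the $d$-th roots of unity and $\alpha \in \overline{K}$ satisfies $\alpha^d = -f$. By unique factorization in $\overline{K}[z]$, we must have $G(x,y,z) = \prod_{k\in S}(z - \omega_k \alpha)$ for some subset $S \subseteq \{0,\ldots,d-1\}$ of size $m$.

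The constant term (in $z$) of $G$ equals $\pm\bigl(\prod_{k\in S}\omega_k\bigr)\,\alpha^m$, and this constant term lies in $\mathbb{C}[x,y]$. Hence some nonzero scalar multiple of $\alpha^m$ is a polynomial $g \in \mathbb{C}[x,y]$; raising to the $d$-th power, $g^d = \gamma\, f^m$ for some $\gamma \in \mathbb{C}^\times$. By unique factorization in $\mathbb{C}[x,y]$, if we write $g = c'\prod_i \ell_i^{e_i}$, then comparing exponents of each irreducible $\ell_i$ forces $d\,e_i = m$ for every $i$. Since $1 \le m \le d-1$, the ratio $m/d$ is not an integer, producing a contradiction.

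The main obstacle is the third step: justifying that any monic factor $G \in K[z]$ of $F$ really does correspond (after the splitting over $\overline{K}$) to a sub-product of the linear factors $z - \omega_k \alpha$, so that its constant term is a scalar multiple of $\alpha^m$. This is a clean consequence of unique factorization in $\overline{K}[z]$, but must be stated carefully. The final clause ``In particular, $x^d - y^d + z^d$ is irreducible'' follows immediately because $x^d - y^d = \prod_{k=0}^{d-1}(x - \zeta^k y)$ with $\zeta = e^{2\pi i/d}$ is a product of $d$ distinct linear forms in $\mathbb{C}[x,y]$, hence square-free, so the main statement applies.
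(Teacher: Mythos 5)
Your proof is correct, but it takes a genuinely different route from the paper's. The paper stays entirely inside $\mathbb{C}[x,y][z]$: it writes the putative factors in increasing powers of $z$ as $G_0(x,y)+G_a(x,y)z^a+\cdots$ and $H_0(x,y)+H_b(x,y)z^b+\cdots$, uses $f=G_0H_0$ and square-freeness to get $\gcd(G_0,H_0)=1$, and then kills the lowest uncancelled power of $z$ by a degree count ($G_0\mid G_a$ with $\deg G_a<\deg G_0$ forces $G_a=0$). You instead pass to the splitting field of $z^d+f$ over $K=\mathbb{C}(x,y)$: unique factorization in the PID $\overline{K}[z]$ shows that the constant term of a monic factor of $z$-degree $m$ is a root of unity times $\alpha^m$ (with $\alpha^d=-f$), hence $\alpha^m$ is, up to a nonzero scalar, a polynomial $g$ with $g^d=\gamma f^m$, and comparing exponents in the UFD $\mathbb{C}[x,y]$ gives $d\,e_i=m$, impossible for $1\le m\le d-1$. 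This is the classical Kummer-type irreducibility argument for binomials $z^d-u$; the step you flag as the main obstacle really is just unique factorization in $\overline{K}[z]$, and it closes cleanly here precisely because $\mathbb{C}\subset K$ supplies all $d$-th roots of unity, so the scalar $\prod_{k\in S}\omega_k$ lies in $K$ (over a ground field without enough roots of unity one meets the Vahlen--Capelli exception, e.g.\ $z^4+4b^4=(z^2+2bz+2b^2)(z^2-2bz+2b^2)$). Your route is conceptually cleaner and visibly uses only that $f$ is square-free and nonconstant; the paper's route is more elementary, avoiding field extensions altogether. Two small points you should make explicit: neither factor can have $z$-degree $0$, since such a factor would divide every $z$-coefficient of $F$, in particular the leading coefficient $1$, so the range $1\le m\le d-1$ is legitimate; and every irreducible factor of $g$ is associate to one of the $\ell_i$ because it divides $g^d=\gamma f^m$, which justifies writing $g=c'\prod_i\ell_i^{e_i}$.
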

\begin{proof}
Suppose otherwise that $F = GH$, where $\deg G = j$, $\deg H = d-j$ where $1 \le j \le d-1$.
Since $F(0,0,1) = 1$, both $G$ and $H$ must have terms involving $z$. Write $G$ and $H$
as polynomials in increasing powers of $z$:
\[
\begin{gathered}
G(x,y,z) = G_0(x,y) + G_a(x,y)z^a + \dots \\
H(x,y,z) = H_0(x,y) + H_b(x,y)z^b + \dots 
\end{gathered}
\]
Here, $G_a$ and $H_b$ are non-zero binary forms of degree $j-a$ and $n-j-b$ respectively.
We see that $F(x,y,0) = f(x,y) = G_0(x,y)H_0(x,y)$, and since $f$ is square-free, 
$gcd(G_0,H_0) = 1$. If $a < b$ then $F = GH$ will contain the un-cancelled term
$H_0(x,y)G_a(x,y)z^a$, and since $a \le j \le d-1$, this is impossible. A similar contradiction
occurs if $b < a$, hence $a=b$, and the coefficient of $z^a$ in $GH$ is $G_0H_a + H_0G_a$.
Since $a < d$, we must have $G_0H_a + H_0G_a = 0$, and since $gcd(G_0,H_0) = 1$, this
implies that $G_0 \ | \ G_a$. But $deg(G_0) = j$ and $
deg(G_a) = j-a$, so $G_a = 0$, a contradiction. In particular, $x^d - y^d$ is square-free, 
completing the proof.
\end{proof}


By using 
the topological fact Theorem \ref{irred} 
together with the indefiniteness and irreducibility of $x_1^d - x_2^d + x_3^d$
from Proposition \ref{T:PR}, we are able to give a quick proof of Theorem \ref{T:thmB} for $n \ge 3$.

\begin{proof}[Proof of Theorem \ref{T:thmB}]
For $n=2$, this has already been done. Now suppose $n \ge 3$. In view of Corollary \ref{C:square}, it
suffices to find indefinite irreducible forms in $F_{n,d}$ with exactly $k$ terms. 
As we have seen, the real form $p(x) = x_1^d - x_2^d + x_3^d$ is irreducible; since
$p(e_1) = 1$ and $p(e_2) = -1$, it is indefinite.

Take a sufficiently small real perturbation of $p$  involving exactly $j$ ($0 \le j \le N(n,d) - 3$) monomials
other than $x_1^d, x_2^d, x_3^d$. This produces an indefinite irreducible form in 
$F_{n,d}^k$ with exactly $k$ monomials, for every value $k = 3+j$, where
 $0 \le j \le N(n,d) - 3$. This covers all needed values of $k$, except for $k=2$.
 
For $k=2$, suppose $p(x_1,\dots,x_n) = x_1x_2^{d-1}+x_3^d$. Then $p(1,1,\dots,0) = 1$ and
$p(-1,1,0,\dots,0) = -1$, so $p$ is indefinite. Suppose $p = qr$, where $q,r$ are not constants.
The degree of $x_1$ in $p$ equals 1, which  is the sum of the degrees of $x_1$ in $q$ and $r$,
hence after permuting $q$ and $r$ if necessary, we may assume that $x_1$ does not appear in $q$. Thus
\[
p(x_1,\dots,x_n) = x_1x_2^{d-1}+x_3^d = q(x_2,\dots, x_n)\left(r_1(x_2, \dots, x_n) x_1+ r_2(x_2, \dots, x_n)\right).
\]
It follows that $q$ divides both $x_2^{d-1}$ and $x_3^d$, so $q$ is constant, a contradiction. Hence $p$ is irreducible and $p \in F_{n,d}^2 \setminus  F_{n,d}^{1}$.

In other words, we have constructed  indefinite irreducible 
$p \in F_{n,d}^k \setminus  F_{n,d}^{k-1}$. Theorem \ref{T:thmB} then follows from Corollary \ref{C:square}.
\end{proof}

We conclude with some final remarks suggesting future work.

First, since each $\Sigma_{n,d}^k$ is a closed convex cone, the strict inclusion
implies that $\Sigma_{n,2d}^k \setminus \Sigma_{n,2d}^{k-1}$ has an interior. 
(Robinson \cite[\S3]{Rob} showed in 1969 
that any sufficiently small perturbation of $\sum x_j^{2d}$ is in $\Sigma_{n,2d}^2$.)
Thus, beyond the examples given here, there
should exist forms which are not perfect squares there. 
More directly, pick $p \in \Sigma_{n,2d}^k \setminus \Sigma_{n,2d}^{k-1}$. If $p$ is not a perfect square, then we are done!
Otherwise, suppose $p = g^2$ for some form $g$.
We may assume that $p$ is not a multiple of $x_1^{2d}$ (otherwise, permute variables).
Since $\Sigma_{n,2d}^{k-1}$ is closed, there exists $\lambda > 0$ so that 
$g^2 + \lambda x_1^{2d} \in \Sigma_{n,2d}^k \setminus \Sigma_{n,2d}^{k-1}$ as well. 
If any such form is a square, then
\[
h^2 = g^2 + \lambda x_1^{2d} \implies  \lambda x_1^{2d} = h^2 - g^2 = (h+g)(h-g) \implies
h \pm g = c_{\pm}x_1^{d},
\]
which implies that $p = g^2$ is  a multiple of $x_1^{2d}$, a contradiction.  It would be useful to find explicit families of forms in
$\Sigma_{n,2d}^k \setminus \Sigma_{n,2d}^{k-1}$.

We note that $\Sigma^1_{n,2d}$ corresponds to linear programming,  $\Sigma^2_{n,2d}$ corresponds to second order 
cone programming (see \cite{Ahmadi-Majumdar}) and $\Sigma^{N_{n,d}}_{n,2d} = \Sigma_{n,2d}$ corresponds to semidefinite programming. 
It seems natural to investigate what $\Sigma^k_{n,2d}$ corresponds to for $3 \le k < N(n,d)$.

Finally, it seems important to examine the asymptotic behavior of the cones $\Sigma^k_{n,2d}$ as $n \to \infty$.

\noindent 

\end{document}